\chardef\bslash=`\\ % p. 424, TeXbook
\def\verbatim{\interlinepenalty\@M \@verbatim
  \leftskip\@totalleftmargin\advance\leftskip2pc
  \frenchspacing\@vobeyspaces \@xverbatim}
\newtheorem{thm}{Theorem}[section]
\newtheorem{lem}[thm]{Lemma}
\newtheorem{que}[thm]{Question}
\begin{document}

%%%%%%% Begin Topmatter %%%%%%%%%%

\centerline{\it In memory of my dear friend Alex Chigogidze}

\par\bigskip
\title
{Ordering A Square}
\author{Raushan ~Z.~Buzyakova}
\email{Raushan\_Buzyakova@yahoo.com}
\keywords{linearly ordered topological space}
\subjclass{54F05, 06B30}

%%%%%%% End topmatter %%%%%%%%%

\begin{abstract}{
We identify a condition on $X$ that guarantees that any finite power of $X$ is homeomorphic to a subspace of a linearly ordered space.} 
\end{abstract}

\maketitle
\markboth{Raushan Z. Buzyakova}{Ordering A Square}
{ }

\section{Introduction}\label{S:intro}
\par\bigskip
To start our discussion, let us agree on terminology. A linearly ordered space, or a LOTS, is one whose topology is generated by open intervals and open rays with respect to some linear order on $X$. A space homeomorphic to a subspace of a LOTS is called a generalized ordered space, or a GO space. It is known (see, for example, \cite{BL}), a Hausdorff space $L$ is a GO space, if  its topology can be generated by a collection of convex sets (not necessarily open) with respect to some linear  order on $L$.  Throughout the paper, we will also refer to LOTS and GO spaces as orderable and suborderable, respectively.

Having a topology compatible with an order is a delicate property that can be destroyed by many standard operations. Under favorable conditions, however,  order-topology ties demonstrate a remarkable resistance to the product operation as demonstrated by zero-dimensional separable metric spaces. A step into a non-metrizable world quickly reveals that zero-dimensionality has to be coupled with very strong properties to achieve a desired resistance of orderability to the product operation. For example, $\omega_1$ is a zero-dimensional LOTS with "cannot be better"  local properties, while $\omega_1\times\omega_1$ has a rather rigid non-orderable structure. One can see that  $\omega_1^2$ is  not sub-orderable by observing that it is not hereditarily normal. Another natural example is Sorgenfrey Line, which is known to be a GO-space. The square of the line is not a GO-space. One explanation is non-normality. Observe, however, that every countable power of the Sorgenfrey Line admits a continuous injection into the irrationals. 

The goal of this paper is to identify a property that may serve as a characterization of spaces with (sub) orderable finite powers. In Theorem \ref{thm:mainpower}, we present a sufficient conditions that may turn out to be a necessary one.  We then observe that the condition in Theorem \ref{thm:mainpower} is a criterion  in the scope of well ordered subspaces (Theorem \ref{thm:ordinal}). We do not know if Theorem \ref{thm:mainpower} can be reversed without narrowing its scope.

In notation and terminology, we will follow \cite{Eng}. Even though we study ordered spaces, we will not need a notation for an open interval. Therefore, we reserve $(a,b)$ to denote the ordered pair. All spaces are assumed Tychonoff. We say that a subset $A\subset X$ separates distinct $x,y\in X$ if $A$ meets $\{x,y\}$ by exactly one element. A family $\mathcal A$ of subsets of $X$ separates distinct $x,y\in X$ if at least one member of the family separates $x$ from $y$.

\section{Results}\label{S:results}
\par\bigskip
To formulate our main result we would like to introduce a property that can be extracted from many arguments leading to orderability of certain spaces.
\par\bigskip\noindent
{\bf Definition of the $P$-number of $X$.}  {\it  The $P$-number of a space $X$  is $|X|$  if $X$ is discrete. Otherwise, the $P$-number of $X$ is the largest cardinal number $\tau$ such that the intersection of any fewer than $\tau$ open sets of $X$ is open.}
\par\bigskip\noindent
Note that the $P$-number of $X$ is well-defined. Indeed, if $X$ is not discrete, then there exists an infinite set $A$ and 
$b\in \bar A\setminus A$. Then the family  $\{X\setminus \{a\}:a\in A\}$  consists of open sets and has non-open intersection. Thus, the $P$-number of $X$ is 
$$
\min\{|\mathcal O|: \mathcal O\ consists\ of \ open \ sets\ and\ has \ nonopen\ intersection\}
$$
Since the minimum is computed over a non-empty set, it exists. In particular, the $P$-number of any non-discrete metric space 
is $\omega$. The $P$-number of $\{\omega_1\}\cup \{\alpha\in\omega_1: \alpha\ is \ isolated\}$  is $\omega_1$.
\par\bigskip
Note that if a space has an $\omega_1$-long convergent sequence and an $\omega$-long convergent sequence, then the square of the space is not suborderable because the product of these two sequences is not hereditarily normal. This and similar structures are eliminated if a space has a $\tau$-discrete basis, where $\tau$ is the $P$-number of the space. We start by identifying a necessary condition for suborderability. We then show that the found property is finitely productive.
\par\bigskip\noindent
\begin{thm}\label{thm:sufficient}
Let $X$ have a $\tau$-discrete basis of clopen sets , where $\tau$ is the $P$-number of $X$. Then $X$ is a GO-space.
\end{thm}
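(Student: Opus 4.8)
The plan is to equip $X$ with a linear order with respect to which $X$ has a base of order-convex clopen sets; the criterion recalled from \cite{BL} (a Hausdorff space whose topology is generated by a collection of convex sets is a GO-space) then finishes the proof. Concretely, I will first turn the given base into a family of clopen partitions, then use the $P$-number to refine these into a coherent ``tree'' of partitions, and finally read a lexicographic-type order off that tree.

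\emph{From a discrete clopen base to clopen partitions, and then to a tree.} Write the given base as $\bigcup_{\alpha<\tau}\mathcal B_\alpha$ with each $\mathcal B_\alpha$ discrete and consisting of clopen sets. A discrete family of clopen sets is pairwise disjoint and has clopen union, so $\mathcal P_\alpha:=\mathcal B_\alpha\cup\{X\setminus\bigcup\mathcal B_\alpha\}$ is a partition of $X$ into clopen sets, and since $\mathcal B_\alpha\subseteq\mathcal P_\alpha$ the family $\bigcup_{\alpha<\tau}\mathcal P_\alpha$ is again a base for $X$. Now for $\alpha<\tau$ let $\mathcal Q_\alpha$ consist of the nonempty sets $\bigcap_{\beta\le\alpha}P_\beta$ with $P_\beta\in\mathcal P_\beta$, i.e. the common refinement of $\{\mathcal P_\beta:\beta\le\alpha\}$. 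Since $\alpha<\tau$ and $\tau$ is infinite, each such set is an intersection of fewer than $\tau$ open sets, hence open by definition of the $P$-number (when $X$ is discrete there is nothing to check), and it is also an intersection of closed sets; so $\mathcal Q_\alpha$ is a clopen partition of $X$. For $\beta\le\alpha$ the partition $\mathcal Q_\alpha$ refines $\mathcal Q_\beta$ (and refines $\mathcal P_\alpha$), so $\mathcal Q:=\bigcup_{\alpha<\tau}\mathcal Q_\alpha$ is a base for $X$ any two of whose members are nested or disjoint, i.e. $X$ is non-archimedean. Write $Q^x_\alpha$ for the member of $\mathcal Q_\alpha$ containing $x$; the sequence $\langle Q^x_\alpha\rangle_{\alpha<\tau}$ is $\subseteq$-decreasing, and since $\mathcal Q$ separates points of the Hausdorff space $X$, for $x\ne y$ there is a least ordinal $\mu(x,y)<\tau$ with $Q^x_{\mu(x,y)}\ne Q^y_{\mu(x,y)}$.

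\emph{The order.} Choose, by the Axiom of Choice, a linear order $<_\alpha$ on each $\mathcal Q_\alpha$, and declare $x\prec y$ iff $Q^x_{\mu(x,y)}<_{\mu(x,y)}Q^y_{\mu(x,y)}$. The usual ``first difference'' bookkeeping for lexicographic orders (using that the cells $Q^{\cdot}_\beta$ agree below $\mu$) shows $\prec$ is a strict linear order on $X$. The one genuinely delicate point, and the step I expect to be the main obstacle, is to verify that every $Q\in\mathcal Q$ is $\prec$-convex: if $x,z\in Q\in\mathcal Q_\alpha$ and $x\prec y\prec z$, one argues that $\mu(x,y)>\alpha$ and $\mu(y,z)>\alpha$, for otherwise $y$ is compared with both $x$ and $z$ already at some level $\le\alpha$, and there $x$ and $z$ sit in the same cell (namely the cell of $\mathcal Q_{\mu}$ containing $Q$), so $y$ is separated from $x$ and from $z$ through the very same ordered pair of $\mathcal Q$-cells; this forces $z\prec y$, contradicting $y\prec z$. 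Hence $Q^y_\alpha=Q^x_\alpha=Q$, so $y\in Q$. The same computation shows each ray $\{w:w\prec a\}$ and $\{w:a\prec w\}$ is open in $X$ (for $w\prec a$ the cell $Q^w_{\mu(w,a)}$ lies inside the ray), so the order topology of $\prec$ is coarser than that of $X$. Thus $\mathcal Q$ is a base for $X$ consisting of $\prec$-convex sets, and by the criterion from \cite{BL}, $X$ is a GO-space.
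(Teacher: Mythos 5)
Your proof is correct and takes essentially the same route as the paper: your refined partitions $\mathcal Q_\alpha$ are exactly the paper's families $\mathcal P_\alpha$ (common refinements whose cells are intersections of fewer than $\tau$ clopen sets, open precisely because of the $P$-number), and your first-difference order is the very order the paper builds after enumerating each level $\mathcal B_\alpha$. The only difference is presentational: you define the order in one stroke lexicographically and check linearity, convexity of the cells, and openness of rays directly, whereas the paper constructs the order by transfinite recursion with the invariants A1--A5; both arguments conclude with the convex-base criterion for GO-spaces cited from \cite{BL}.
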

\begin{proof}
Let $\mathcal B = \bigcup_{\alpha<\tau} \mathcal B_\alpha$  be a basis as in the hypothesis. Since each $\mathcal B_\alpha$ is a discrete family of clopen sets, we may assume that $\mathcal B_\alpha$ is a cover of $X$ for each $\alpha$.  Additionally, we may assume that $\tau$ is infinite and $\mathcal B_0=\{X\}$.

Inductively, for each $\alpha<\tau$, we will define a relation $\mathcal O_\alpha$ on $X$ so that $\bigcup_{\alpha<\tau}\mathcal O_\alpha$ will be an order $\prec$ on $X$ compatible with the topology of $X$. In addition, we will define a collection $\mathcal P_\alpha$ so that  $\bigcup_{\alpha<\tau}\mathcal P_\alpha$ will consist of $\prec$-convex sets and form  a basis for the topology of $X$. 

\par\bigskip\noindent
\underline{\it Step $0$.} Put $\mathcal O_0 = \emptyset$ and $\mathcal P_0=\{X\}$.

\par\bigskip\noindent
\underline{\it Assumption ($\beta<\alpha$).} Assume that for each $\beta <\alpha$, where $\alpha <\tau$, we have defined $\mathcal O_\beta$ and the following hold:
\begin{description}
	\item[{\rm A1}] $\mathcal O_\gamma\subset \mathcal O_\beta$ if $\gamma<\beta$.
	\item[{\rm A2}] If $x,y$ are separated by $\bigcup_{\gamma\leq \beta} \mathcal B_\gamma$, then either $(x,y)$ or $(y,x)$, but not both,  is in $\mathcal O_\beta$.
	\item[{\rm A3}] If $(x,y),(y,z)\in \mathcal O_\beta$, then $(x,z)\in \mathcal O_\beta$.
	\item[{\rm A4}] $(x,x)\not \in \mathcal O_\beta$ for any $x\in X$.
	\item[{\rm A5}] If $x,y$ are not separated by any element of $\bigcup_{\gamma\leq \beta}\mathcal B_\gamma$, then
the following two statements are true:

$$ \forall z [(z,x)\in \mathcal O_\beta\to(z,y)\in \mathcal O_\beta] $$
$$ \forall z [(x,z)\in \mathcal O_\beta\to(y,z)\in \mathcal O_\beta] $$
\end{description}
Note that our assumptions hold for $\beta=0$.

\par\bigskip\noindent
\underline{\it Step $\alpha<\tau$.} Put $\mathcal F_\alpha =\bigcup_{\beta<\alpha}\mathcal B_\beta$.
\par\medskip\noindent
{\it Definition of $\mathcal P_\alpha$:} $P\in \mathcal P_\alpha$ if and only if $P$ is the non-empty intersection of a maximal subfamily of $\mathcal F_\alpha$ with the finite intersection property.

\par\medskip\noindent
The next three claims will be used in our post-construction argument.

\par\bigskip\noindent
{\it Claim 1. Any $I\in  \mathcal P_\alpha$ is open in $X$.}
\par\smallskip\noindent
Let $\mathcal I\subset \mathcal F_\alpha$ be such that $I=\bigcap \mathcal I$. Since each $\mathcal B_\beta$ is discrete, $\mathcal I$ meets each $\mathcal B_\beta$ for $\beta<\alpha$ by exactly one element. Therefore, $|\mathcal I|\leq \alpha<\tau$. Since the $P$-number of $X$ is $\tau$, the set $I=\bigcap \mathcal I$ is open, which proves the claim.

\par\bigskip\noindent
{\it Claim 2. If $x\in B_x\in \mathcal F_\alpha$, then $x\in P\subset B_x$ for some $P\in \mathcal P_\alpha$.}
\par\smallskip\noindent
Since each $\mathcal B_\beta$ is a disjoint cover of $X$, the family $\{B_{x,\beta}:x\in B_{x,\beta}\in \mathcal B_\beta, \beta<\alpha\}$ is a maximal subfamily in $\mathcal F_\alpha$ with the finite intersection property and its intersection $P$ has the desired properties, which completes the claim.

\par\bigskip\noindent
The next claim is obvious and is stated without a proof.
\par\bigskip\noindent
{\it Claim 3. $\mathcal P_\alpha$ is a partition of $X$ inscribed in each $\mathcal B_\beta$, $\beta<\alpha$.}
\par\smallskip\noindent
 
\par\bigskip\noindent
Next order elements of $\mathcal B_\alpha = \{B_{\alpha,\lambda}:\lambda<\tau_\alpha\}$.
\par\medskip\noindent
{\it Definition of $\mathcal O_\alpha$:} 
Put  $\mathcal O_\alpha'=\{(x,y):x,y\in P\in \mathcal P_\alpha, x\in B_{\alpha,\lambda}, y\in B_{\alpha,\gamma},\lambda<\gamma\}$ and
$\mathcal O_\alpha = \mathcal O_\alpha'\cup (\bigcup_{\beta<\alpha}\mathcal O_\beta).$

\par\medskip\noindent
Let us verify A1-A5 for $\alpha$.

\par\bigskip\noindent
\begin{description}
	\item[\underline{\rm A1 check}] This property follows from the fact that each $\mathcal O_\beta$, where $\beta<\alpha$, is represented in the union that defines $\mathcal O_\alpha$.
	\item[\underline{\rm A2 check}] Fix $x,y$ separated by $\bigcup_{\beta\leq \alpha}\mathcal B_\beta$. If $x,y$ are separated by $B\in \mathcal B_\beta$ for some $\beta<\alpha$, then apply A2 for $\beta$ and $A1$ for $\alpha$.

Otherwise, by Claim 3, there exists $P\in \mathcal P_\alpha$ that contains $\{x,y\}$. Since $\mathcal B_\alpha$ is a disjoint cover of $X$ that separates $x$ and $y$, we conclude that $x\in B_{\alpha,\lambda}$ and $y\in \mathcal B_{\alpha,\gamma}$ for some $\lambda\not=\gamma$. By the definition of $\mathcal O_\alpha'$, either $(x,y)$ or $(y,x)$, but not both,  is in $\mathcal O_\alpha'$ and, therefore, in $\mathcal O_\alpha$.
	\item[\underline{\rm A3 check}] Fix $(x,y),(y,z)\in \mathcal O_\alpha$.

Assume that  both $(x,y),(y,z)\in \mathcal O_\beta$ for some $\beta<\alpha$. Then $(x,z)\in \mathcal O_\beta$ by A3 for $\beta$. Then $(x,z)\in \mathcal O_\alpha$ by A1 for $\alpha$.

Assume that  $(x,y)\in \mathcal O_\beta $ and $(y,z)\not\in \mathcal O_\beta$ for some $\beta<\alpha$.  By A2 for $\alpha$, we conclude that $(y,z)\not\in \mathcal O_\beta$. Then $y$ and $z$ are not separated by 
$\bigcup_{\gamma\leq \beta}\mathcal B_\gamma$. Then $(x,z)\in \mathcal O_\beta$ by A5 for $\beta$. Therefore, $(x,z)\in \mathcal O_\alpha$ by A1 for $\alpha$.

Assume that  $(x,y)\not\in \mathcal O_\beta $ and $(y,z)\in \mathcal O_\beta$ for some $\beta<\alpha$. Then the previous argument applies. 

Assume that neither $(x,y)$ nor $(y,z)$ is $\bigcup_{\beta<\alpha}\mathcal O_\beta$. Then there exists $P\in \mathcal P_\alpha$ such that $\{x,y,z\}\subset P$. Also, there exist $\beta<\gamma<\lambda$ such that $x\in B_{\alpha,\beta},y\in B_{\alpha,\gamma},z\in B_{\alpha,\lambda}$. Since $\beta<\lambda$, we conclude that $(x,z)\in \mathcal O_\alpha'\subset \mathcal O_\alpha$.

	\item[\underline{\rm A4 check}] Fix any $x\in X$. By assumption, $(x,x)\not \in \bigcup_{\beta<\alpha}\mathcal O_\beta$. By definition, $(x,x)\not\in \mathcal O_\alpha'$. Therefore, $(x,x)\not\in \mathcal O_\alpha$.
	\item[\underline{\rm A5 check}] Assume that  $x$ and $y$ are not separated by  $\bigcup_{\gamma\leq \alpha}\mathcal B_\gamma$. Then there exist $P\in \mathcal P_\alpha$ such that $\{x,y\}\subset P$ and $\gamma$ such that 
$x,y\in B_{\alpha,\gamma}\in \mathcal B_\alpha$. We will consider the case $(z,x)\in \mathcal O_\alpha$.

Assume $(z,x)\in \mathcal O_\beta$ for some $\beta<\alpha$. Then $(z,y)\in \mathcal O_\beta$  by A5 for $\beta$. By A1 for $\alpha$, we conclude that $(z,y)\in \mathcal O_\alpha$.

Assume  $(z,x)\not\in \mathcal O_\beta$ for any $\beta<\alpha$. Then there exists $P\in \mathcal P_\alpha$ such that $\{z,x,y\}\subset P$. Since $(z,x)\in \mathcal O_\alpha$ and $x\in B_{\alpha,\gamma}$ there exists $\lambda <\gamma$ such that $z\in B_{\alpha,\lambda}$. Since $y\in B_{\alpha,\gamma}$ we conclude that $(z,y)\in \mathcal O_\alpha'\subset \mathcal O_\alpha$.
\end{description}

\par\bigskip\noindent
The inductive construction is complete.

\par\bigskip\noindent
Define $\prec$  by letting $x\prec y$ if and only if $(x,y)\in \bigcup_{\alpha<\tau}\mathcal O_\alpha$.  Let us show that the relation is a linear order. Non-reflexivity follows from A4. Transitivity follows from A3 an A1. To check comparability, fix distinct $x,y\in X$. Since $X$ is Hausdorff and $\mathcal B$ is a basis for the topology of $X$, there exists $\alpha<\tau$ such that
$x$ and $y$ are separated by $\mathcal B_\alpha$. By A2, either $(x,y)$ or $(y,x)$ is in $\mathcal O_\alpha$. 

\par\bigskip\noindent
The conclusion of the theorem follows from the next two claims.

\par\bigskip\noindent
{\it Claim 4. $\bigcup_{\alpha<\tau} \mathcal P_\alpha$ forms a basis for the topology of $X$.}
\par\smallskip\noindent
To prove the claim, fix $x\in X$ and $B\in \mathcal B_\alpha$ containing $x$. By Claim 3, there exists $P\in \mathcal P_{\alpha+1}$ such that $x\in P\in B$. By Claim 1, $P$ is open in $X$. The claim is proved.

\par\bigskip\noindent
{\it Claim 5. Every element of $\bigcup_{\alpha<\tau}\mathcal P_\alpha$ is convex with respect to $\prec$.}
\par\smallskip\noindent
Fix $P\in\mathcal P_\alpha$ and $x,y\in P$ with $x\prec y$.  Fix any $z$ such that $x\prec z\prec y$. 
Since $x,y\in P\in\mathcal P_\alpha$, we conclude that $x,y$ are not separated by $\bigcup_{\beta<\alpha}\mathcal B_\beta$. If $x,z$ were separated by  $\bigcup_{\beta<\alpha}\mathcal B_\beta$ then $(x,z)$ would have been in $\mathcal O_\beta$ for $\beta<\alpha$. By A5, $(y,z)$ would have been in $\mathcal O_\beta$, contradicting $z\prec y$. Therefore, $x,y,z$ are not separated by 
 $\bigcup_{\beta<\alpha}\mathcal B_\beta$. By Claim 3, $x,y,z\in P$, which proves convexity of $P$.
\end{proof}

\par\bigskip
To  state the promised necessary condition  for suborderability of finite powers, we need the following lemma.
\par\bigskip\noindent
\begin{lem}\label{lem:productivity1}
Let $\tau$ be the $P$-number of non-discrete spaces $X$ and $Y$. Let 
$X$ and $Y$  have  $\tau$-discrete bases of clopen sets.  Then the $P$-number of $X\times Y$ is $\tau$, and  $X\times Y$  has a $\tau$-discrete basis of clopen sets.
\end{lem}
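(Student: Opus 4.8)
The plan is to establish the two assertions of the lemma separately: that the $P$-number of $X\times Y$ equals $\tau$, and that $X\times Y$ carries a $\tau$-discrete basis of clopen sets. Both reduce to elementary manipulations with basic open rectangles; I do not anticipate a genuine obstacle, and the only step that needs mild care is checking that the intersection of fewer than $\tau$ open subsets of $X\times Y$ is open.

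For the $P$-number, I would first observe that $X\times Y$ is non-discrete: if $x$ is a non-isolated point of $X$, then $(x,y)$ is non-isolated in $X\times Y$ for every $y\in Y$, since a basic rectangular neighborhood of $(x,y)$ has first factor with more than one point. Hence the non-trivial clause of the definition governs $X\times Y$. To see that an intersection $\bigcap_{\alpha<\kappa}W_\alpha$ of $\kappa<\tau$ open subsets of $X\times Y$ is open, fix a point $(x,y)$ in it and, for each $\alpha$, a basic rectangle with $(x,y)\in U_\alpha\times V_\alpha\subset W_\alpha$. Because the $P$-number of each of $X$ and $Y$ is $\tau$ and $\kappa<\tau$, the sets $U=\bigcap_{\alpha<\kappa}U_\alpha$ and $V=\bigcap_{\alpha<\kappa}V_\alpha$ are open, and $(x,y)\in U\times V\subset\bigcap_{\alpha<\kappa}W_\alpha$; so the intersection is open, giving that the $P$-number of $X\times Y$ is at least $\tau$. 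For the reverse inequality, use that $X$ is non-discrete with $P$-number $\tau$ to obtain open sets $\{O_\alpha:\alpha<\tau\}$ in $X$ whose intersection $O$ fails to be open, witnessed by some $x\in O$; then $\{O_\alpha\times Y:\alpha<\tau\}$ is a family of $\tau$ open subsets of $X\times Y$ whose intersection $O\times Y$ contains $(x,y)$ but no rectangular neighborhood of it, hence is not open. Thus the $P$-number of $X\times Y$ is exactly $\tau$.

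For the basis, let $\mathcal B=\bigcup_{\alpha<\tau}\mathcal B_\alpha$ and $\mathcal C=\bigcup_{\alpha<\tau}\mathcal C_\alpha$ be $\tau$-discrete bases of clopen sets for $X$ and $Y$. The family $\{B\times C:B\in\mathcal B,\ C\in\mathcal C\}$ is a basis for $X\times Y$, since products of members of bases for the factors form a basis for the product topology, and its members are clopen because a product of clopen sets is clopen. For fixed $\alpha$ and $\beta$, the subfamily $\{B\times C:B\in\mathcal B_\alpha,\ C\in\mathcal C_\beta\}$ is discrete: given $(x,y)$, a neighborhood $U$ of $x$ meeting at most one member of $\mathcal B_\alpha$ and a neighborhood $V$ of $y$ meeting at most one member of $\mathcal C_\beta$ give a neighborhood $U\times V$ of $(x,y)$ meeting at most one member of the subfamily. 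Since $\tau$ is infinite, $|\tau\times\tau|=\tau$, so fixing a bijection $\xi\mapsto(\alpha_\xi,\beta_\xi)$ from $\tau$ onto $\tau\times\tau$ and setting $\mathcal D_\xi=\{B\times C:B\in\mathcal B_{\alpha_\xi},\ C\in\mathcal C_{\beta_\xi}\}$ presents the clopen basis $\{B\times C:B\in\mathcal B,\ C\in\mathcal C\}$ as $\bigcup_{\xi<\tau}\mathcal D_\xi$, a union of $\tau$ discrete families, which is the required $\tau$-discrete basis of clopen sets.

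As indicated, the only point where the hypothesis on both factors is genuinely used, and the only place one could slip, is the verification that $X\times Y$ is closed under intersections of fewer than $\tau$ open sets; this is carried out pointwise via basic rectangles, so no statement identifying the ``$G_\tau$-modification'' of a product with the product of modifications is needed. Everything else is bookkeeping, the essential arithmetic fact being $\tau\cdot\tau=\tau$ for the infinite cardinal $\tau$.
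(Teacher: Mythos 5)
Your proof is correct and follows essentially the same route as the paper: the pointwise rectangle argument for intersections of fewer than $\tau$ open sets, and products of the discrete clopen families from the two factors for the basis. The only differences are that you make explicit two points the paper leaves implicit --- the upper bound on the $P$-number via the witnessing family $\{O_\alpha\times Y\}$ (the paper just cites the copy of $X$ inside $X\times Y$) and the reindexing of the $\tau\times\tau$ family of discrete collections as $\tau$ many, using $\tau\cdot\tau=\tau$ --- both of which are welcome clarifications rather than deviations.
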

\begin{proof}
Let $\{O_\alpha:\alpha<\kappa\}$ be a collection of open sets in $X\times Y$, where $\kappa<\tau$. Assume that 
$S=\bigcap_{\alpha<\kappa}O_\alpha$ is not empty. Fix any $(x,y)\in S$. Then for each $\alpha<\kappa$, there exist $U_\alpha,V_\alpha$ open neighborhoods of $x$ and $y$, respectively, such that $U_\alpha\times V_\alpha\subset O_\alpha$.
Since the $P$-number of $X$ and $Y$ is $\tau$, we conclude that $U=\bigcap_{\alpha<\kappa}U_\alpha$ and $V=\bigcap_{\alpha<\kappa}V_\alpha$ are open. Therefore, $(x,y)\in U\times V\subset S$. Hence $S$ is open. Therefore, the $P$-number of $X\times Y$ is greater than or equal to $\tau$. Since $X\times Y$ contains a copy of $X$, the $P$-number of $X\times Y$ is at most $\tau$. Thus, the $P$-number of $X\times Y$ is $\tau$.

Next, let $\mathcal U_\alpha$ be a discrete collection of clopen subsets of $X$ such that $\bigcup_{\alpha<\tau}\mathcal U_\alpha$ is a basis for the topology of $X$. Similarly, we fix a basis $\bigcup_{\alpha<\tau} \mathcal V_\alpha$ for the topology of $Y$.  Put $\mathcal B_{\alpha\beta} = \{U\times V: U\in \mathcal U_\alpha, V\in \mathcal V_\beta\}$. The family $\mathcal B_{\alpha\beta}$ is a discrete collection of clopen sets, since $\mathcal U_\alpha$ and $\mathcal V_\beta$ are. By the definition of  product topology, $\bigcup_{\alpha,\beta<\tau}\mathcal B_{\alpha\beta}$ is a basis for the topology of $X\times Y$.
\end{proof}
\par\bigskip\noindent
The statement of the next Lemma is a simple corollary to Lemma \ref{lem:productivity1}
\par\bigskip\noindent
\begin{lem}\label{lem:productivity}
Let $\tau$ be the $P$-number of a non-discrete $X$. Let 
$X$ have a $\tau$-discrete basis of clopen sets.  Then for any positive integer $n$, the $P$-number of $X^n$ is $\tau$ and  $X^n$  has a $\tau$-discrete basis of clopen sets.
\end{lem}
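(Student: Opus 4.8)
The plan is to obtain Lemma \ref{lem:productivity} as an immediate induction on $n$ using Lemma \ref{lem:productivity1} as the inductive step. The base case $n=1$ is trivial: $X^1 = X$, which by hypothesis is non-discrete, has $P$-number $\tau$, and carries a $\tau$-discrete basis of clopen sets. For the inductive step, I would assume that $X^n$ is non-discrete with $P$-number $\tau$ and a $\tau$-discrete basis of clopen sets, and that $X$ itself has these same properties; then I would apply Lemma \ref{lem:productivity1} with the two factors being $X^n$ and $X$. The lemma yields directly that $X^{n+1} = X^n \times X$ has $P$-number $\tau$ and a $\tau$-discrete basis of clopen sets.

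The one point that needs a word of care is the hypothesis of Lemma \ref{lem:productivity1} that \emph{both} factors be non-discrete: I need to know $X^n$ is non-discrete in order to feed it into the lemma. This is where I would spend the only real sentence of the argument: since $X$ is non-discrete it contains a non-isolated point $p$, and then $(p, p, \dots, p)$ is non-isolated in $X^n$ (a basic open box around it projects onto a neighborhood of $p$ in the first coordinate, which cannot be $\{p\}$), so $X^n$ is non-discrete. With that observed, the induction goes through with no further obstacles.

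Since the whole content is a short induction invoking the preceding lemma, I expect no genuine obstacle at all — the statement is flagged in the excerpt itself as ``a simple corollary.'' If I wanted to be slightly more economical I could even skip the explicit non-discreteness remark inside the induction by noting once, before starting, that every finite power of a non-discrete space is non-discrete; but folding it into the inductive hypothesis keeps the bookkeeping cleanest. The proof would therefore read: establish the base case, state the inductive hypothesis including non-discreteness, verify non-discreteness of $X^{n+1}$ propagates (or note it follows from non-discreteness of $X$), apply Lemma \ref{lem:productivity1} to $X^n \times X$, and conclude.
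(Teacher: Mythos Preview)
Your proposal is correct and matches the paper's approach exactly: the paper states only that Lemma~\ref{lem:productivity} is ``a simple corollary to Lemma~\ref{lem:productivity1}'' and gives no further proof, so your induction on $n$ using Lemma~\ref{lem:productivity1} for the step (together with the observation that $X^n$ stays non-discrete) is precisely the intended argument.
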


\par\bigskip
Theorem \ref{thm:sufficient} and Lemma \ref{lem:productivity} imply our main result of the paper, stated as follows.
\par\bigskip\noindent
\begin{thm}\label{thm:mainpower}
If the  $P$-number of $X$ is $\tau$ and  $X$ has a $\tau$-discrete basis of clopen sets,  then $X^n$ is a generalized ordered space for any natural  number $n$.
\end{thm}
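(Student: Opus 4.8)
\emph{Proof proposal.} The plan is to deduce the statement directly from Theorem~\ref{thm:sufficient} and Lemma~\ref{lem:productivity}, after first separating off the degenerate case in which $X$ is discrete.

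I would begin by disposing of the discrete case. If $X$ is discrete, then $X^n$ is discrete, and a discrete space is trivially a GO space: with respect to any linear order on it, the singletons form a family of convex sets generating the (discrete) topology, so the criterion recalled in Section~\ref{S:intro} applies. Alternatively, the family of all singletons of $X^n$ is a one-term, hence trivially $\tau$-discrete, basis of clopen sets, and the $P$-number of a discrete space equals its cardinality, so Theorem~\ref{thm:sufficient} applies to $X^n$ as it stands.

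Now suppose $X$ is non-discrete, so that $\tau$ is a well-defined infinite cardinal and the hypotheses of Lemma~\ref{lem:productivity} are met. Fix a positive integer $n$. By Lemma~\ref{lem:productivity}, the $P$-number of $X^n$ equals $\tau$ and $X^n$ carries a $\tau$-discrete basis of clopen sets. These are precisely the hypotheses of Theorem~\ref{thm:sufficient} with $X^n$ in the role of $X$; applying that theorem gives that $X^n$ is a GO space, which is the conclusion we want.

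I expect no genuine obstacle here: the substantive work is already contained in the inductive construction of the order $\prec$ in Theorem~\ref{thm:sufficient}, and the only point requiring care is that the cardinal $\tau$ appearing as the $P$-number in the hypothesis of Theorem~\ref{thm:sufficient} coincides with the cardinal that Lemma~\ref{lem:productivity} certifies to be the $P$-number of $X^n$ — but this matching is built into the statement of the lemma. One should also observe in passing that $X^n$ is Tychonoff, in particular Hausdorff, as is needed both for the cited suborderability criterion and for the comparability step in Theorem~\ref{thm:sufficient}.
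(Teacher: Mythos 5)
Your argument is exactly the paper's: the author derives Theorem~\ref{thm:mainpower} precisely by combining Lemma~\ref{lem:productivity} with Theorem~\ref{thm:sufficient}, which is what you do. Your extra care with the discrete case (which Lemma~\ref{lem:productivity} formally excludes) is a reasonable touch the paper leaves implicit, but it does not change the route.
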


\par\bigskip
Theorem \ref{thm:mainpower} implies, in particular, that if $X$ is zero-dimensional and the weight of $X$ is equal to the $P$-number of $X$, then any finite power of $X$ is suborderable. Since the square of the Sorgenfrey Line is not suborderabe, the weight-P-number equality cannot be replaced by the density-P-number equality. Also, having $P$-number equal to weight locally and uniformly is not sufficient either. The space of countable ordinals serves as a counterexample. 

It is natural to wonder if  our sufficient condition for suborderability of $X^2$ is  a characterization. Our next result shows that it may be, at least for a sufficiently large class of spaces. For the sake of our next result only, we say that $X$ is 
character-homogeneous at points of a set $S\subset X$ if $\chi(x,X)=\chi(y,X)$ for all  $x,y\in S$. We will use the theorem of Engelking and Lutzer that (see \cite{BL} or \cite{Lut}) {\it "A GO-space $X$ is paracompact if and only if no close subspace of $X$ is homeomorphic to a closed subspace of a regular uncountable cardinal"}. 
\par\bigskip\noindent
\begin{thm}\label{thm:ordinal}
Let $X$ be a subspace of an ordinal. Then the following conditions are equivalent:
\begin{enumerate}
	\item $X$ has no stationary subspaces and is character-homogeneous at all non-isolated points.
\item $X$ has a $\tau$-discrete basis of clopen sets, where $\tau$ is the $P$-number of $X$.
	\item $X^n$ is suborderable for any $n$.
\end{enumerate}
\end{thm}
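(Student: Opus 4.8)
The plan is to prove the cycle $(2)\Rightarrow(3)\Rightarrow(1)\Rightarrow(2)$. The implication $(2)\Rightarrow(3)$ is nothing but Theorem \ref{thm:mainpower}. Before the other two I would record two elementary facts that organize the argument. First, $X$, being a subspace of an ordinal, is zero-dimensional and has a base of clopen convex sets of the form $(\xi,x]\cap X$; in particular every non-isolated $x$ is a limit of $X$ from the left only, and $(\xi,x]\cap X$ is clopen for every $\xi<x$. Second, for non-discrete $X$ the $P$-number equals $\min\{\chi(x,X): x\ \text{non-isolated}\}$, and each $\chi(x,X)$ is the cofinality of the set of points of $X$ below $x$, hence a regular infinite cardinal; therefore condition (1)'s character-homogeneity at non-isolated points is equivalent to the assertion that $\chi(x,X)=\tau$ for \emph{every} non-isolated $x$, where $\tau$ is the $P$-number.

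For $(3)\Rightarrow(1)$ I would argue the contrapositive, using that a suborderable space, being monotonically normal, is hereditarily normal; so it suffices to exhibit a non-normal subspace of $X^2$ whenever (1) fails. If (1) fails because $X$ is not character-homogeneous, pick non-isolated $x,y$ with $\mu:=\chi(x,X)<\nu:=\chi(y,X)$, both regular. Choosing strictly increasing cofinal sequences below $x$ and below $y$ of lengths $\mu$ and $\nu$ and adjoining their suprema produces LOTS $D_\mu\ni x$ and $D_\nu\ni y$ inside $X$, each with a unique non-isolated point ($x$, resp. $y$) and with $|D_\mu\setminus\{x\}|=\mu<\nu=\operatorname{cf}(\nu)$. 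Then $Y:=(D_\mu\times D_\nu)\setminus\{(x,y)\}\subset X^2$ is non-normal by the Tychonoff-plank argument: the sets $\{x\}\times(D_\nu\setminus\{y\})$ and $(D_\mu\setminus\{x\})\times\{y\}$ are disjoint and closed in $Y$, and any open $U$ containing the second contains, by the cardinality bound $\mu<\operatorname{cf}(\nu)$, a full rectangle $(D_\mu\setminus\{x\})\times T$ with $T$ a neighborhood of $y$ in $D_\nu$; hence $\{x\}\times(T\setminus\{y\})\subset\overline U$, so $U$ meets every open set around the first. If instead (1) fails because $X$ has a stationary subspace $S$ — a subset homeomorphic to a stationary subset of a regular uncountable cardinal $\kappa$ — then $S\times S$ is a subspace of $X^2$, and I would show it is not hereditarily normal by localizing the classical proof that $\omega_1\times\omega_1$ is not hereditarily normal: split $S$ into two disjoint stationary pieces and run a Pressing Down (Fodor) argument on an appropriate subspace of $S\times S$ built around the diagonal. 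I expect this last step to be the main obstacle: the $\omega_1$-argument has to be pushed through for an arbitrary stationary subset of an arbitrary regular uncountable cardinal, where the subspace topology on $S$ may be quite irregular, so identifying the witnessing subspace and the two closed sets, and verifying non-separability, takes care.

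For $(1)\Rightarrow(2)$: if $X$ is discrete, the family of all singletons is a single discrete clopen base and $\tau=|X|$, so assume $X$ non-discrete; then $\tau$ is the common character of the non-isolated points. The hypothesis "no stationary subspace" implies via the quoted Engelking--Lutzer theorem that $X$ is paracompact (a closed copy of a regular uncountable cardinal would itself be a stationary subspace), and in fact forbids stationary subspaces outright. I would then construct the base by transfinite recursion: fix for each non-isolated $p$ a decreasing clopen neighborhood base $\{(\xi^p_\alpha,p]\cap X:\alpha<\tau\}$, let $\mathcal W_\alpha$ be the open cover of $X$ by the $\alpha$-th such neighborhoods together with the singletons of isolated points, and, using paracompactness and zero-dimensionality, choose a discrete clopen cover $\mathcal B_\alpha$ refining $\mathcal W_\alpha$ and all earlier $\mathcal B_\beta$. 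The point to be checked carefully is that the recursion can be run so that $\bigcup_{\alpha<\tau}\mathcal B_\alpha$ is genuinely a base at each non-isolated $p$, i.e. so that the $\mathcal B_\alpha$-member containing $p$ eventually shrinks inside $(\xi^p_\alpha,p]$; here the absence of any stationary subspace is exactly what rules out the "branching" phenomenon that makes $\omega_1$ itself fail, while the fact that every non-isolated point has character \emph{exactly} $\tau$ guarantees that the $\tau$ stages neither exhaust early nor leave any point with too coarse a local base. This produces the required $\tau$-discrete clopen base and closes the cycle.
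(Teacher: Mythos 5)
Your cycle $(2)\Rightarrow(3)\Rightarrow(1)\Rightarrow(2)$ differs from the paper (which proves $(1)\Rightarrow(2)\Rightarrow(3)\Rightarrow(1)$, handling $(3)\Rightarrow(1)$ by citing Katetov's theorem on hereditarily normal products rather than by your explicit plank and pressing-down constructions), but the decisive issue is that your $(1)\Rightarrow(2)$ has a genuine gap at exactly the point you flag. Requiring the discrete clopen cover $\mathcal B_\alpha$ to refine $\mathcal W_\alpha$ and all earlier $\mathcal B_\beta$ only guarantees that the $\mathcal B_\alpha$-member containing a non-isolated $p$ lies inside \emph{some} member of $\mathcal W_\alpha$, and that member may be $(\xi^q_\alpha,q]\cap X$ for a different non-isolated $q>p$ with $\xi^q_\alpha$ far below $p$; nothing in the recursion forces it inside $(\xi^p_\alpha,p]$. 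One can in fact run your recursion making legal choices at every stage so that the piece containing $p$ always contains a fixed interval $(\eta,p]\cap X$ with $\eta<p$ (use a sequence of distinct points $q_\alpha>p$ whose $\alpha$-th assigned neighborhoods still reach below $\eta$), and then $\bigcup_\alpha\mathcal B_\alpha$ is not a base at $p$. The remark that ``absence of stationary subspaces rules out the branching phenomenon'' is a heuristic, not an argument, and it is precisely the missing content. The paper avoids this by inducting on the ordinal hosting $X$: in the limit case the Engelking--Lutzer theorem gives paracompactness and a free-sum decomposition $X=\oplus_{\gamma\in\Gamma}X_\gamma$ with each $X_\gamma$ bounded, and in the successor case $\alpha=\beta+1$ with $\beta\in X$ non-isolated the tails $X\setminus(\gamma+1)$ are adjoined as the basic neighborhoods of $\beta$; this decomposition is what guarantees every non-isolated point eventually sits at the top of its own clopen summand and acquires a local base of the right size. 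Without some mechanism of this kind your construction does not prove $(1)\Rightarrow(2)$.

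Your $(3)\Rightarrow(1)$ is a legitimately different and more self-contained route than the paper's appeal to Katetov, and the deleted-plank argument for two non-isolated points of distinct characters $\mu<\nu$ is correct (it uses only that both characters are regular and $\mu<\operatorname{cf}(\nu)$). But the stationary case is left as an admitted ``main obstacle'': what you need is the classical lemma that if $S_1,S_2$ are disjoint stationary subsets of a regular uncountable $\kappa$ then the subspace $S_1\times S_2\subset\kappa^2$ is not normal, witnessed by the closed sets $\{(x,y):x<y\}$ and $\{(x,y):y<x\}$ and proved by pressing down in each coordinate, together with the fact that every stationary set splits into two disjoint stationary sets; as written, this step is a plan rather than a proof, so it too needs to be completed (though the lemma is standard). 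The implication $(2)\Rightarrow(3)$ via Theorem~\ref{thm:mainpower} coincides with the paper.
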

\begin{proof}
{\it Proof of (1)$\Rightarrow$(2)}. We will prove the implication by induction on ordinal $\alpha$ that can host $X$.
\par\bigskip\noindent
\underline {\it Step ($n$ is finite).} Then $X=\{a_1,...,a_m\}$ for some $m\leq n$ and the $P$-number of $X$ is $m$. The family 
$\bigcup_{i\leq m}\mathcal B_i$, where $\mathcal B_i=\{\{x_n\}: n=1,...,m\}$, is an $m$-discrete basis of $X$.

\par\bigskip\noindent
{\it Remark.} Observe that a discrete space of any cardinality has a $\tau$--discrete basis for any $\tau>0$,
\par\bigskip\noindent
\underline {\it Assumption for $\beta<\alpha$.} Assume that for any $X\subset \beta <\alpha$, the implication (1)$\Rightarrow$(2) is true.

\par\bigskip\noindent
\underline{\it Inductive Step $\alpha$.} 

\par\medskip\noindent
\underline {\it Case of limit $\alpha$.} Since $X$ has no subspaces homeomorphic to a stationary subset, by the Engelking-Lutzer Theorem, $X$ is paracompact. Therefore, $X$  can be written as a free sum $\oplus_{\gamma\in \Gamma}X_\gamma$, where $X_\gamma\subset \gamma<\alpha$ for each $\gamma\in \Gamma$. Since $X$ is character homogeneous at all non-isolated points, there exists a cardinal $\tau$ such that
$\chi(x,X)=\tau$ for all non-isolated $x\in X$. Then each $X_\gamma$ has no subspaces homeomorphic to a stationary subset and is character homogeneous at non-isolated points. The $P$-number of any non-discrete $X_\gamma$  is $\tau$. By Inductive Assumption and Remark,  each $X_\gamma$ has a $\tau$-discrete basis $\mathcal B_\gamma$ of clopen sets. Since  $\{X_\gamma:\gamma\in \Gamma\}$ is a discrete cover, the family $\bigcup_{\gamma\in\Gamma}\mathcal B_\gamma$ is a $\tau$-discrete basis of $X$ consisting of clopen sets.  

\par\medskip\noindent
\underline {\it Case of isolated $\alpha$.} Let $\alpha =\beta +1$. We may assume that $\beta\in X$ and $\beta$ is not isolated in $X$. Otherwise, we can reduce our scenario to a smaller ordinal.

Since $X$ has no subspaces homeomorphic to a stationary subset, we conclude that $X\setminus \{\beta\}$ can be written as a free sum $\oplus_{\gamma\in \Gamma}X_\gamma$, where $X_\gamma\subset \gamma<\beta$ for each $\gamma\in \Gamma$
and $|\Gamma |=cf(\beta )$. Since $\beta$ is not isolated in $X$, we conclude that the character of $\beta$ is $cf(\beta)$. Then the character at all non-isolated points of $X$ is $cf(\beta)$. Following the argument of the limit case,  each $X_\gamma$ has a $cf(\beta)$-discrete basis of clopen sets $\mathcal B_\gamma$.  The family 
$$
\{X\setminus (\gamma + 1):\gamma\in \Gamma\}\cup  \{B\in \mathcal B_\gamma:\gamma\in \Gamma\}
$$
is $cf(\beta)$-discrete basis of $X$ consisting of clopen sets.  

\par\bigskip\noindent
{\it Proof of (2)$\Rightarrow$(3).} This implication is Theorem \ref{thm:mainpower}.

\par\bigskip\noindent
{\it Proof of (3)$\Rightarrow$(1).}  If $X$ had a stationary subset or  two limit points of distinct characters then $X\times X$ would not have been hereditarily normal. This statement follows from the argument of Katetov \cite{Kat} that if $X\times Y$ is hereditarily normal then either  every closed subset of $X$ is a $G_\delta$-set or every countable subset of $Y$ is closed. Since every GO space is hereditarily normal, the implication is proved.
\end{proof}
\par\bigskip
We would like to finish the paper with a few questions that naturally arise as a result of our discussion.
\par\bigskip\noindent
\begin{que}
Let $X\times X$ be suborderable. Is $X\times X$ orderable? What if $X$ is orderable?
\end{que}
\begin{que}
Let $X\times X$ be orderable (suborderable). Is $X^n$ orderable (suborderable)?
\end{que}
\begin{que}
Let $X$ have a $\tau$-discrete basis of clopen sets, where $\tau$ is the $P$-number of $X$. Is $X\times X$ orderable?
What if $\tau$ is the weight of $X$?
\end{que}
\par\bigskip\noindent
Finally, the unaccomplished goals of the paper are summarized in the next two questions.
\begin{que}
Assume that $X\times X$ is suborderable (or orderable). Is it true that $X$ has a $\tau$-discrete basis of clopen sets, where $\tau$ is the $P$-number of $X$?
\end{que}
\begin{que}
Assume that $X\times X$ is suborderable (or orderable) space of density $\tau$.  Is it true that the weight of $X$ is equal to the $P$-number of $X$?
\end{que}

\par\bigskip\noindent
{\bf Acknowledgment.} The author would like to thank the referee for valuable remarks and corrections.

\end{document}